\documentclass{amsart}

	\usepackage{amsmath}
	\usepackage{amssymb}
	\usepackage{amsthm}
	\usepackage{aliascnt}
	\usepackage[colorlinks=true, urlcolor=blue, linkcolor=blue, citecolor=blue]{hyperref}
	\usepackage{MnSymbol}
	
	\newtheorem{thm}{Theorem}

	\newaliascnt{prop}{thm}
	\newtheorem{prop}[prop]{Proposition} 
	\aliascntresetthe{prop}

	\newcommand{\rk}{\operatorname{rk}}
	
	\newcommand{\K}{\mathbb{K}}
	\newcommand{\ins}{\minushookup}
	
	\newcommand{\al}{\operatorname{al}}
	\newcommand{\rmax}{\operatorname{r_{max}}}

\begin{document}

\title{High-rank ternary forms of even degree}
\author{\href{http://orcid.org/0000-0002-4619-8249}{Alessandro De Paris}}

\begin{abstract}
We exhibit, for each even degree, a ternary form of rank strictly greater than the maximum rank of monomials. Together with an earlier result in the odd case, this gives the lower bound
\[\rmax(3,d)\ge\left\lfloor\frac{d^2+2d+5}4\right\rfloor\]
for $d\ge 2$, where $\rmax(n,d)$ denotes the maximum rank of degree $d$ forms in $n$ variables with coefficients in an algebraically closed field of characteristic zero.
\end{abstract}

\maketitle

This short communication complements \cite{D} and \cite{BuT}, in view of the Waring problem for (the space of \emph{all}) ternary forms of given degree, that is, to determine their maximum Waring rank. A conjectural answer to this problem has been outlined at the end of \cite[Introduction]{D}. If that conjecture is true, then the lower bound given for odd degrees by \cite[Th.~1]{BuT} can not be improved. For even degree, one should similarly show that the greatest known rank (which to date is reached by monomials) can be raised by one. That is what we do here, in \autoref{Main}.

We pursue the ideas of \cite{BuT} and we borrow its basic framework, but restricted to the case of our interest. We consider an algebraically closed field  $\K$ of characteristic zero and a graded ring $S=\K[x,y,z]$ in connection with another `dual' graded ring  $T=\K[\alpha,\beta,\gamma]$. This simply means that it is understood a perfect pairing $S_1\times T_1\to\K$, such that the order bases $(x,y,z)$ and $(\alpha,\beta,\gamma)$ ar dual to each other. The perfect pairing extends to \emph{apolarity}, which may be described as the action of $T$ on $S$ such that $\sum a_{ijk}\alpha^i\beta^j\gamma^k$ acts as $\sum a_{ijk}\partial^{i+j+k}/\partial x^i\partial y^j\partial z^k$. Here we prefer the notation $\partial_\Theta F$ instead of $\Theta\ins F$ for the action of $\Theta\in T$ on $F\in S$ (for instance, $\partial_\alpha F=\partial F/\partial x$). The \emph{apolar algebra} $A^F$ of $F\in S$ is the quotient of $T$ over the (homogeneous) ideal $\left\{\Theta\in T:\partial_\Theta F=0\right\}$. The (necessarily finite) dimension of $A^F$ as a $\K$-vector space is called the \emph{apolar length} of $F$ and denoted by $\al(F)$. Since $A^F$ is a graded ring in a natural way, the apolar length is the sum of all values of the Hilbert function. Let us also recall that if $F\in S_d$ then the values at $n$ and at $d-n$ of the Hilbert function of $A^F$ coincide, and they also equal the dimension of the space $\left\{\partial_\Theta F: \Theta\in T_n\right\}$. Finally, let us recall from \cite{BuT} the notation $H(n,d,s)$, which stands for the function on integers whose value at $i$ is $\min\left\{\dim R_i,\dim R_{d-i},s\right\}$, where $R$ is the (graded) ring of polynomial in $n$ variables over $\K$.

Like in \cite{BuT}, two useful ingredients in the proof will be the following:
\begin{itemize}
\item if a form $F$ has a power sum decomposition with all base linear forms not annihilated by $\partial_\alpha$, then it must contain at least $\al(F)-\al\left(\partial_\alpha F\right)$ summands (this follows from \cite[Prop.~3]{BuT});
\item for every power sum decomposition of $F$, at least $\al\left(\partial_\alpha F\right)-\al\left(\partial_{\alpha^2}F\right)$ of the base linear forms are not annihilated by $\partial_\alpha$ (it follows from \cite[Prop.~4]{BuT}).
\end{itemize}

We shall also use the following elementary fact.

\begin{prop}\label{Pre}
Let $R$ be a ring of binary forms, say $R=\K[y,z]$, $d$ a nonnegative integer, $r\in\{0,\ldots, d\}$ and $F\in R_d$ a linear combination of $r$ $d$-th powers of linear forms. Then
\begin{enumerate}
\item\label{1} $F$ belongs to $z^rR_{d-r}$ if and only if it is a scalar multiple of $z^d$;
\item\label{2} if the above conditions are true and the base linear forms are pairwise independent, then at most one of the summands in the combination has a nonzero coefficient.
\end{enumerate}
\end{prop}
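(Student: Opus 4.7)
The plan is to derive both parts from a single explicit coefficient computation. For part (1), after grouping proportional summands we may assume the $\ell_i$ are pairwise independent: the number of terms drops to some $r'\le r$, and $F\in z^rR_{d-r}\subseteq z^{r'}R_{d-r'}$, so the hypotheses of the reduced statement are satisfied. Part (2) already assumes pairwise independence. Writing $\ell_i=a_iy+b_iz$ and expanding,
\[F=\sum_{i=1}^r c_i\ell_i^d=\sum_{k=0}^d\binom{d}{k}y^kz^{d-k}\sum_{i=1}^r c_ia_i^kb_i^{d-k},\]
the condition $F\in z^rR_{d-r}$ is equivalent to the vanishing of the coefficient of $y^kz^{d-k}$ for $k=d-r+1,\ldots,d$, i.e., to the $r$ equations $\sum_ic_ia_i^kb_i^{d-k}=0$ in this range.

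After pulling the factor $a_i^{d-r+1}$ out of the $i$-th summand, these $r$ equations form a homogeneous linear system in the unknowns $w_i:=c_ia_i^{d-r+1}$, with coefficient matrix $N=[\,a_i^{j-1}b_i^{r-j}\,]_{j,i=1,\ldots,r}$. The key computation is the Vandermonde-type identity
\[\det N=\prod_{1\le i<j\le r}(a_jb_i-a_ib_j),\]
which I would verify by dividing the $i$-th column of $N$ by $b_i^{r-1}$ on the open set where all $b_i\ne 0$, thereby reducing to the classical Vandermonde in the ratios $a_i/b_i$, and then extending the identity by polynomiality. Each factor $a_jb_i-a_ib_j$ vanishes precisely when $[a_i:b_i]=[a_j:b_j]$, so pairwise independence makes $\det N$ nonzero and forces $w_i=c_ia_i^{d-r+1}=0$ for every $i$.

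Consequently, for each $i$ either $c_i=0$ or $a_i=0$, and the second alternative means $\ell_i$ is a scalar multiple of $z$. Pairwise independence permits at most one such index, so at most one summand survives in the combination; when it does, it equals some scalar multiple of $z^d$, and otherwise $F=0$. This is exactly part (2), and together with the initial reduction it yields part (1). The main step is the Vandermonde-type determinant; everything else is routine bookkeeping.
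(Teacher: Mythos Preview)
Your proof is correct and proceeds along a genuinely different path from the paper's. The paper argues by reduction on the degree: the case $r=d$ is handled by invoking the classical fact that $d$-th powers of pairwise independent binary linear forms are linearly independent, and for $r<d$ one applies $\partial_y^{d-r}$ to $F$, which kills nothing except the summand proportional to $z^d$ and lands the problem back in the base case $r=d$ (now in degree $r$). Your argument instead expands everything in coordinates and recognises the resulting $r\times r$ linear system as governed by the homogeneous Vandermonde $\det[a_i^{j-1}b_i^{r-j}]=\prod_{i<j}(a_jb_i-a_ib_j)$, whose nonvanishing is exactly pairwise independence of the $\ell_i$.

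The two approaches are morally close---the paper's differentiation is a coordinate-free way of isolating the same top $r$ coefficients you write down, and its base case $r=d$ is itself a Vandermonde statement---but the packaging differs. The paper's route stays within the apolarity language used throughout and leans on a standard independence fact; yours is entirely self-contained, handles all $r$ uniformly without splitting into cases, and makes the linear-algebra core explicit. Either is perfectly adequate for the modest role this proposition plays later.
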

\begin{proof}
When $r=d$, \eqref{1} is trivial and \eqref{2} follows from the fact that there are no $d+1$ linearly dependent $d$-th powers of pairwise independent linear forms.

Let us now assume $r<d$. Since the `if' part in \eqref{1} is trivial, let us assume that $F\in z^rR_{d-r}$. Let us suppose, in addition, that the base linear forms are pairwise independent, and let us consider $F':=\partial^{d-r}F/{\partial y}^{d-r}$. All powers not proportional to $z^d$ survive under derivation, but by the previous case they can not occur in the derived combination, since $\deg F'=r$ (when $r=0$ there is no summand). Therefore, even in the original combination there is at most one nonzero summand, proportional to $z^d$. Henceforth, $F$ itself is proportional to $z^d$. To get \eqref{1} even when the base linear forms are not pairwise proportional, it suffices to reduce in the obvious way the linear combination to one with pairwise independent base linear forms.
\end{proof}

We are ready to prove our result.

\begin{prop}\label{Main}
Let $k$ be a nonnegative integer. There exists a form $F\in\K[x,y,z]$ of degree $2k+2$ with $\rk F>k^2+3k+2$.
\end{prop}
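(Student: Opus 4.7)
The plan is to exhibit an explicit $F\in S_{2k+2}$ and prove $\rk F>k^2+3k+2$ by contradiction, combining the two bullets from the introduction with \autoref{Pre}. A natural candidate is a perturbation of an extremal monomial: for instance $F=y^{k+1}z^{k+1}+xG$, where $G\in S_{2k+1}$ is a high-rank odd-degree form (say, the one constructed in \cite[Th.~1]{BuT} or a close variant). This makes $\partial_\alpha F=G$, and the apolar lengths $\al(F)$, $\al(G)$, $\al(\partial_\alpha G)$ should then be available in closed form.

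Suppose for contradiction that $F=\sum_{i=1}^{r}L_i^{2k+2}$ with $r\le k^2+3k+2$. Partition the summands according to whether the base linear form is annihilated by $\partial_\alpha$: write $F=F_1+F_2$, where $F_1$ collects the $s$ summands whose base involves $x$, and $F_2\in\K[y,z]_{2k+2}$ collects the remaining $r-s$ summands. Then $\partial_\alpha F_1=\partial_\alpha F=G$, so $F_1$ has a power sum decomposition with all base linear forms not annihilated by $\partial_\alpha$, and the first bullet of the introduction applied to $F_1$ gives
\[s\ge\al(F_1)-\al(G)=\al(F-F_2)-\al(G),\]
while the second bullet applied to $F$ itself gives $s\ge\al(G)-\al(\partial_\alpha G)$. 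In parallel, $F_2$ is a binary form of degree $2k+2$ of binary rank at most $r-s$, and \autoref{Pre} pins down its shape in the boundary regime where $r-s$ is small (in the extreme, forcing $F_2$ to be proportional to a single $(2k+2)$-th power in $\K[y,z]$).

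The closing inequality is then $r=s+(r-s)\ge\al(F-F_2)-\al(G)+(r-s)$, and the goal is to show that the right-hand side is $\ge k^2+3k+3$ for every binary $F_2$ compatible with the assumed decomposition, producing the contradiction. The main obstacle is precisely this uniform lower bound: one must compute $\al(F-F_2)=\al\bigl(xG+(y^{k+1}z^{k+1}-F_2)\bigr)$ for $F_2\in\K[y,z]_{2k+2}$ and verify that the apolar excess of $F-F_2$ over $G$ (i.e.\ the jump across $\partial_\alpha$) always compensates whatever is gained by taking $F_2$ to have small binary rank, with \autoref{Pre} clearing the corner case where $r-s$ is so small that $F_2$ is essentially a pure power in $y,z$ and therefore cannot absorb the binary contribution $y^{k+1}z^{k+1}$ of $F$. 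The odd-degree improvement of \cite{BuT}, or a close variant, is the natural source of the needed apolar excess carried by the factor $xG$.
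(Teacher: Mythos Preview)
Your overall skeleton matches the paper's proof exactly: write $F=F_1+F_2$ with $F_2\in\K[y,z]$, use the second bullet (with $\partial_{\alpha^2}F=0$) to force $r-s\le 2$, use the first bullet to get $\al(F_1)\le\al(G)+r$, and then contradict this by computing the Hilbert function of $A^{F_1}$ directly. So the strategy is right; the gap is in the execution.

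First, a slip in the setup: your claim ``this makes $\partial_\alpha F=G$'' requires $G\in\K[y,z]$, since $\partial_\alpha(xG)=G+x\,\partial_\alpha G$. Thus $G$ cannot be the ternary odd-degree form of \cite[Th.~1]{BuT}; it must be a \emph{binary} form. The paper in fact takes the monomial $G=y^{k-1}z^{k+2}$, for which $\al(G)=k^2+3k$ and $\al(\partial_\alpha G)=0$, so the second bullet immediately caps $F_2$ at two summands.

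Second, and more seriously, you defer the entire computation of $\al(F-F_2)$, and this is where all the content lies. The paper computes the Hilbert function of $A^{F^\alpha}$ degree by degree; the delicate points are at $n=k$ and $n=k+1$, where one must show that $\partial_{\beta^n}F^\alpha$ (resp.\ $\partial_{\beta^{n-1}\gamma}F^\alpha$) falls outside the space $V_{n-1}$ of $(n-1)$-th derivatives of $G$. Here the choice of the binary perturbation is \emph{not} free: the paper uses $y^{2k}z^2$ precisely so that $\partial_{\beta^k}F$ is a nonzero multiple of $y^kz^2\notin z^3S^\alpha_{k-1}$, and \autoref{Pre} is invoked to rule out that the unknown two-term $H=F_2$ could accidentally push $\partial_{\beta^k}F^\alpha$ back into $V_{k-1}$. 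Your suggested perturbation $y^{k+1}z^{k+1}$ would give $\partial_{\beta^k}F$ proportional to $yz^{k+1}\in z^3S^\alpha_{k-1}$ (for $k\ge 2$), and the argument at $n=k$ would collapse. So the ``main obstacle'' you flag is real, and resolving it requires both choosing the binary term carefully and doing the degree-by-degree Hilbert computation, neither of which your proposal supplies.
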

\begin{proof}
For $k=0$ it suffices to take a nondegenerate quadratic form. Hence we can assume $k\ge 1$. Let $S:=\K[x,y,z]$, $T:=\K[\alpha,\beta,\gamma]$ be the dual ring and $S^\alpha:=\ker\partial_\alpha=\K[y,z]$. We show that \[F:=xy^{k-1}z^{k+2}+y^{2k}z^2\] has rank strictly greater than $k^2+3k+2$. Let us calculate the Hilbert function of the apolar algebra $A^G$ of $G:=\partial_\alpha F=y^{k-1}z^{k+2}$. The dimension of the space of all $n$-th order partial derivatives of $G$ gives its value at $n$ and at $2k+1-n$. For $0\le n\le k-1$ that space is spanned by monomials
\[
y^{k-1}z^{k+2-n}\quad,\quad\ldots\quad,\quad y^{k-1-n}z^{k+2}
\]
(in other terms, it is $y^{k-1-n}z^{k+2-n}S^\alpha_n$); for $k-1\le n\le k+2$ it is spanned by
\[
y^{k-1}z^{k+2-n}\;,\quad\ldots\;,\quad yz^{2k-n}\;,\quad z^{2k+1-n}
\]
(in other terms, it is $z^{k+2-n}S^\alpha_{k-1}$). Therefore the Hilbert function of $A^G$ is $H(2,2k+1,k)$. The apolar length of $G$ is the sum of all its values: $\al(G)=k^2+3k$.

We want to show that $\rk F>k^2+3k+2$. By contrary, let us assume $\rk F\le k^2+3k+2$. Since $\partial_\alpha F=G$ and $\partial_{\alpha^2} F=0$, by \cite[Prop.~4]{BuT} we have that for every power sum decomposition of $F$, at least $k^2+3k$ of the involved linear forms are not annihilated by $\partial_\alpha$. Let us take a decomposition as a sum of at most $k^2+3k+2$ powers of pairwise independent linear forms. We can decompose $F$ as $F^\alpha+H$, with $F^\alpha$ encompassing all summands with base linear forms not annihilated by $\partial_\alpha$, and $H$ encompassing at most two summands, with base linear forms in $S^\alpha=\K[y,z]$. Since $G=\partial_\alpha F=\partial_\alpha F^\alpha+\partial_\alpha H=\partial_\alpha F^\alpha$, according to \cite[Prop.~3]{BuT}, $\al\left(F^\alpha\right)-\al(G)$ can not exceed the number of summands of whatever decomposition of $F^\alpha$ with all base linear forms not annihilated by $\partial_\alpha$. Since $F^\alpha$ has such a decomposition with at most $k^2+3k+2$ summands, it follows that $\al\left(F^\alpha\right)\le\al(G)+k^2+3k+2=2k^2+6k+2$. We shall find a contradiction by showing that it must also be $\al\left(F^\alpha\right)>2k^2+6k+2$. To this end, we make a direct computation of the Hilbert function of~$A^{F^\alpha}$.

Since $\partial_{\alpha^2}F^\alpha=0$, the space of all order $n$ partial derivatives of $F^\alpha$ is the sum of the space $V_{n-1}$ all order $n-1$ partial derivatives of $G$ (with $V_{-1}:=\{0\}$ by convention) and the space $W_n$ of all order $n$ partial derivatives of $F^\alpha$ with respect to $y,z$. The space $V_{n-1}$ has already been computed before for $0\le n-1\le k+2$.

Let us first consider the range $0\le n\le k-1$. Since $F^\alpha-xy^{k-1}z^{k+2}=y^{2k}z^2-H\in\K[y,z]=S^\alpha$ and
\[
xy^{k-1}z^{k+2-n}\quad,\quad\ldots\quad,\quad xy^{k-1-n}z^{k+2}
\]
are linearly independent modulo $S^\alpha$, we have $V_{n-1}\cap W_n=\{0\}$ and $\dim W_n=n+1$. Hence $\dim\left(V_{n-1}+W_n\right)=2n+1$.

When $n=k$, the partial derivatives \[\partial_{\gamma^n}F^\alpha\:,\quad\partial_{\beta\gamma^{n-1}}F^\alpha\;,\quad\ldots\;,\partial_{\beta^{n-1}\gamma}F^\alpha\] are still linearly independent modulo $S^\alpha$, because so are the monomials
\[
xy^{k-1}z^2\;,\quad\ldots\;,\quad xyz^k\;,\quad xz^{k+1}\;.
\]
But now $\partial_{\beta^n}F^\alpha\in S^\alpha$. However, we can check that it still lies outside $V_{n-1}=z^3S^\alpha_{k-1}$. Suppose, on the contrary, that $\partial_{\beta^n}F^\alpha\in z^3S^\alpha_{k-1}$. Since $\partial_{\beta^n}F^\alpha+\partial_{\beta^n}H=\partial_{\beta^n}F=\partial_{\beta^k}F$ is a scalar multiple of $y^kz^2$, we would have $\partial_{\beta^n}H\in z^2S^\alpha_k$. But $H$, and hence $\partial_{\beta^k}H$, can be written as a linear combination of two powers of pairwise independent linear forms (using zero coefficients if $H$ has less than two summands). Therefore, by \autoref{Pre} we would have that $\partial_{\beta^k}H$ is a scalar multiple of $z^{k+2}\in z^3S^\alpha_{k-1}$ (taking into account the initial assumption $k\ge1$). Hence $\partial_{\beta^k}F$ would lie in $z^3S^\alpha_{k-1}$ but, on the contrary, it is a nonzero scalar multiple of $y^kz^2$. We conclude that $\dim\left(V_{n-1}+W_n\right)=2n+1$ even when $n=k$.

Up till now we have shown that the Hilbert function of $A^{F^\alpha}$ takes value $2n+1$ at $n$ and at $2k+2-n$ for $0\le n\le k$, and we are checking that $\al\left(F^\alpha\right)>2k^2+6k+2$. Hence we can conclude the proof by showing that the value at $n=k+1$ is strictly greater than
\[
2k^2+6k+2-2\sum_{n=0}^{k}(2n+1)=2k^2+6k+2-2(k+1)^2=2k\;.
\]
Like before, let us first note that since
\[
xy^{k-1}z\;,\quad\ldots\;,\quad xyz^{k-1}\;,\quad xz^k
\]
are linearly independent modulo $S^\alpha$, so are the partial derivatives $\partial_{\gamma^n}F^\alpha$, $\ldots$, $\partial_{\beta^{n-3}\gamma^3}F^\alpha$, $\partial_{\beta^{n-2}\gamma^2}F^\alpha$. Since $\dim V_{n-1}=k$, this shows that the value of the Hilbert function to be computed is at least $2k$, and that we need only one more independent partial derivative. It will be enough to check that at least one of $\partial_{\beta^n}F^\alpha$ and $\partial_{\beta^{n-1}\gamma}F^\alpha$, which clearly are both in $S^\alpha$, is not in $V_{n-1}=z^2S^\alpha_{k-1}$. If $\partial_{\beta^n}F^\alpha\in V_{n-1}$, since $\partial_{\beta^n}F^\alpha+\partial_{\beta^n}H=\partial_{\beta^n}F=\partial_{\beta^{k+1}}F\in z^2S^\alpha_{k-1}=V_{n-1}$, by \autoref{Pre}  we have that $\partial_{\beta^n}H$ is a scalar multiple of $z^{k+1}$. But in this case we have $\partial_{\beta^{n-1}}H=0$ ($n-1=k\ge 1$), hence $\partial_{\beta^{n-1}\gamma}F^\alpha\not\in z^2S^\alpha_{k-1}=V_{n-1}$ because $\partial_{\beta^{n-1}\gamma}F^\alpha=\partial_{\beta^{n-1}\gamma}F^\alpha+\partial_{\beta^{n-1}\gamma}H=\partial_{\beta^{n-1}\gamma}F=\partial_{\beta^k\gamma}F$ is a nonzero scalar multiple of $y^kz$.
\end{proof}

Let $\rmax(n,d)$ be the maximum Waring rank of forms of degree $d$ in $n$ variables. For $n=3$ and $d\ge 2$,  \autoref{Main} and \cite[Th.~1]{BuT} (which holds for $\K$ as well as for $\mathbb{C}$ with no modifications in the proof) give the following lower bound.

\begin{thm}
For $d\ge 2$ we have \[\rmax(3,d)\ge\left\lfloor\frac{d^2+2d+5}4\right\rfloor\;.\]
\end{thm}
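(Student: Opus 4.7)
The plan is to split on the parity of $d$ and invoke, in each parity, the appropriate existence result already established: \cite[Th.~1]{BuT} for odd degrees and \autoref{Main} for even degrees. Since both results are in hand, the theorem reduces to a bookkeeping check that the exhibited rank matches the claimed floor expression in each case.

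First I would write $d=2k+1$ (with $k\ge 1$) or $d=2k+2$ (with $k\ge 0$), these two ranges jointly covering all integers $d\ge 2$. In the odd case, \cite[Th.~1]{BuT} produces a ternary form of degree $2k+1$ whose rank is at least $k^2+2k+2$; as the excerpt has already remarked, the proof in \cite{BuT} goes through for an arbitrary algebraically closed field of characteristic zero without modification, so it applies to $\K$. In the even case, \autoref{Main} produces a ternary form of degree $2k+2$ with rank strictly greater than $k^2+3k+2$, hence of rank at least $k^2+3k+3$.

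It then only remains to compare these two integers with $\lfloor(d^2+2d+5)/4\rfloor$. For odd $d=2k+1$, direct substitution gives
\[
\frac{(2k+1)^2+2(2k+1)+5}{4}=\frac{4k^2+8k+8}{4}=k^2+2k+2,
\]
an exact integer, so the floor equals $k^2+2k+2$; for even $d=2k+2$, the same substitution gives
\[
\frac{(2k+2)^2+2(2k+2)+5}{4}=\frac{4k^2+12k+13}{4}=k^2+3k+3+\frac{1}{4},
\]
whose floor is $k^2+3k+3$. In both parities the lower bound supplied above coincides with $\lfloor(d^2+2d+5)/4\rfloor$, proving the theorem. There is no genuine obstacle here: the substantive content has been fully absorbed into \autoref{Main} and into \cite[Th.~1]{BuT}, and this final statement is simply a uniform repackaging of the two bounds in a single closed-form expression.
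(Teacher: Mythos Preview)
Your argument is correct and is exactly the approach the paper takes: the text preceding the theorem simply says that \autoref{Main} together with \cite[Th.~1]{BuT} yields the bound, and your parity split with the two arithmetic checks is precisely the routine verification that makes this explicit.
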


At the time of writing, we are inclined to believe that the opposite inequality has some chance of being true as well.

\end{document}